\documentclass[11pt]{amsart}
\PassOptionsToPackage{hyphens}{url}
\usepackage{amsmath,amssymb,amsthm,mathtools,url}
\usepackage[margin=1.1in]{geometry}
\usepackage{booktabs,needspace}
\usepackage[section]{placeins}
\usepackage{hyperref}
\hypersetup{hidelinks,
  pdftitle={An inclusion coupling of prefix laws and the Chan--Pak rank inequality for all finite posets},
  pdfauthor={Jukka Virtanen}}

\newtheorem{theorem}{Theorem}[section]
\newtheorem{lemma}[theorem]{Lemma}
\newtheorem{proposition}[theorem]{Proposition}
\newtheorem{corollary}[theorem]{Corollary}
\newtheorem{conjecture}[theorem]{Conjecture}
\newtheorem*{fishburn}{Fishburn's inequality}
\theoremstyle{definition}
\newtheorem{definition}[theorem]{Definition}
\theoremstyle{remark}
\newtheorem{remark}[theorem]{Remark}

\newcommand{\LE}{\operatorname{LE}}
\newcommand{\pos}{\operatorname{pos}}

\newcommand{\inc}{\parallel}
\newcommand{\Prob}{\mathbb{P}}
\newcommand{\E}{\mathbb{E}}
\newcommand{\Z}{\mathbb{Z}}
\newcommand{\Id}{\mathrm{Id}}
\newcommand{\last}{\operatorname{last}}
\newcommand{\first}{\operatorname{first}}

\title[The Chan--Pak rank inequality]{An inclusion coupling of prefix laws and the Chan--Pak rank inequality for all finite posets}

\author{Jukka Virtanen}
\address{Independent researcher, Irvine, California, USA}
\email{semisimplemath@gmail.com}
\thanks{Ph.D.\ in Mathematics, University of California, Los Angeles. The author was previously a postdoctoral researcher and, for eight years, an adjunct faculty member at UCLA}

\subjclass[2020]{06A07 (primary); 05A20, 60J27 (secondary)}
\keywords{linear extensions, Stanley's inequality, log-concavity, Fishburn's inequality, coupling, order ideals}

\date{October 17, 2026}

\begin{document}
\setlength{\emergencystretch}{3em}

\begin{abstract}
Let $P$ be a finite poset, let $x\in P$, and let $N_k$ count its linear extensions in which $x$ occupies position $k$. We prove Chan and Pak's conjecture that, for all integers $a,h\ge1$,
\[
(a+h-1)N_{a+h-1}N_a\ \ge\ (a-1)N_{a-1}N_{a+h}+hN_aN_{a+h}.
\]
The key result is an inclusion coupling of the laws of the set of elements preceding $x$, conditional on consecutive positions of $x$. We construct reversible exchange chains on the corresponding layers of order ideals and couple their generators so that the two prefix sets remain nested. Fishburn's correlation inequality gives monotonicity of endpoint probabilities; their unit total masses provide the rate bounds needed for the coupling. A stationary distribution of the coupled chain then has the required marginals. Normalized cover observables yield a four-term nonnegative certificate for the case $h=1$, and the general case follows by telescoping. The argument applies at every width and gives a proof of Stanley's log-concavity inequality with Fishburn's inequality as its sole external inequality.
\end{abstract}

\maketitle

\section{Introduction}\label{sec:intro}

\subsection{The conjecture}
Let $P$ be a finite poset on $n$ elements and fix $x\in P$. For $k\in\Z$ let
\[
N_k=\bigl|\{L\in\LE(P):\pos_L(x)=k\}\bigr|,
\]
with $N_k=0$ outside $1,\dots,n$. Stanley's inequality~\cite{Stanley1981} states that $N_k^2\ge N_{k-1}N_{k+1}$. In their survey~\cite[\S10.2]{ChanPakSurvey}, Chan and Pak state the following ``surprising inequality'' as a conjecture.

\begin{conjecture}[{Chan--Pak \cite[Conjecture~10.4]{ChanPakSurvey}}]\label{conj:CP104}
Let $P=(X,\prec)$ be a poset on $|X|=n$ elements and fix $z\in X$. Then for all integers $a,i\ge1$,
\[
\begin{aligned}(a+i-1)\,N(P,z,a+i-1)\,N(P,z,a)\ \ge\ {}&(a-1)\,N(P,z,a-1)\,N(P,z,a+i)\\&+i\,N(P,z,a+i)\,N(P,z,a).\end{aligned}
\]
\end{conjecture}

With $z=x$ and $i=h$, this is the inequality
\[
(a+h-1)N_{a+h-1}N_a\ \ge\ (a-1)N_{a-1}N_{a+h}+hN_aN_{a+h}\qquad(a,h\ge1),\tag{T}
\]
and its case $h=1$ is
\[
kN_k^2\ \ge\ (k-1)N_{k-1}N_{k+1}+N_kN_{k+1}\qquad(k\ge1).\tag{C}
\]
We call \textup{(C)} \emph{property~C}.

The survey records two facts about Conjecture~\ref{conj:CP104}, both attributed to Chan--Pak--Panova \cite[\S15.1]{ChanPakSurvey}: the Cross-Product Conjecture implies Conjecture~\ref{conj:CP104} \cite[Theorem~10.5]{ChanPakSurvey}; and Conjecture~\ref{conj:CP104} implies Stanley's inequality together with the equivalence (1)$\Leftrightarrow$(2) in the equality conditions of \cite[Theorem~10.2]{ChanPakSurvey}; see \cite[Proposition~10.6]{ChanPakSurvey}. The survey also reports that computer searches for a counterexample to Conjecture~\ref{conj:CP104} have not succeeded \cite[\S16.3]{ChanPakSurvey}.

\subsection{Main result}
Throughout, $P=(V,<)$ is a finite strict poset, $x\in V$, $n=|V|$, and
\[
D=\{y\in V:y<x\},\qquad R=\{y\in V:y\inc x\},
\]
where $y\inc x$ means that $y\ne x$ and $y,x$ are incomparable; $R$ carries the induced order. For $S\subseteq V$ we write $e(S)$ for the number of linear extensions of the induced subposet $P[S]$, with $e(\varnothing)=1$. For an order ideal $J$ of $R$ we put $I=D\cup J$ and $S_I=V\setminus(I\cup\{x\})$. Such sets $I$ are called \emph{feasible prefixes}; they are exactly the sets of elements preceding $x$ in linear extensions of $P$ (Theorem~\ref{thm:endpoint}).

For a feasible prefix size $p$ let $\mu_p$ be the law of the set of elements preceding $x$ under the uniform measure on $\LE(P)$ conditioned on $\pos(x)=p+1$. Let $L_k$ be the number of linear extensions with $x$ at position $k$ whose immediate predecessor is a lower cover of $x$, and $U_k$ the number whose immediate successor is an upper cover of $x$. Set $L_k=U_k=0$ outside $1,\dots,n$; also $L_1=0$ and $U_n=0$, since the corresponding neighbour does not exist. Put $\ell_k=L_k/N_k$ and $u_k=U_k/N_k$ when $N_k>0$.

\begin{theorem}\label{thm:main}
Let $P$ be a finite poset and $x\in P$.
\begin{enumerate}
\item \textup{(Prefix inclusion coupling.)} For every two consecutive feasible prefix sizes $p,p+1$, there is a coupling of $\mu_p$ and $\mu_{p+1}$ supported on pairs $I\subset J$ with $|J\setminus I|=1$.
\item \textup{(Cover frequencies.)} For consecutive ranks $k,k+1$ with $N_k,N_{k+1}>0$,
\[
(k-1)\ell_k\ge k\ell_{k+1},\qquad(n-k)u_k\le(n-k-1)u_{k+1},
\]
and $N_k-U_k=N_{k+1}-L_{k+1}>0$. Consequently $N_{k+1}/N_k=(1-u_k)/(1-\ell_{k+1})$, and the ratios $N_{k+1}/N_k$ are nonincreasing in $k$ on the support.
\item \textup{(Property C with a defect bound.)} For every $k\ge1$,
\[
D_k:=kN_k^2-(k-1)N_{k-1}N_{k+1}-N_kN_{k+1}\ \ge\ N_kU_k\ \ge\ 0.
\]
\item \textup{(Inequality (T).)} For all integers $a,h\ge1$,
\[
(a+h-1)N_{a+h-1}N_a\ \ge\ (a-1)N_{a-1}N_{a+h}+hN_aN_{a+h}.
\]
\end{enumerate}
No hypothesis on the width of $P$ is needed.
\end{theorem}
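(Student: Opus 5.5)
The plan follows the order of the four parts, with (1) as the engine. For (1) I would use the description of $\mu_p$ from Theorem~\ref{thm:endpoint}: conditioned on $\pos(x)=p+1$, the prefix is a feasible $I=D\cup J$ with $|I|=p$ and weight $\mu_p(I)\propto e(I)\,e(S_I)$. The $p$-layer and the $(p+1)$-layer of feasible prefixes each carry a reversible exchange chain. Its moves remove a maximal element $y$ of $J$ and add a minimal element $z$ of $R\setminus J$. The rates are chosen so that detailed balance holds for the weights $e(I)e(S_I)$. A natural choice is to route each exchange through the intermediate prefix $I\setminus\{y\}$ or $I\cup\{z\}$ in the adjacent layer, with rates given by endpoint probabilities such as $e(I\setminus y)/e(I)$, the probability that $y$ is last in a uniform extension of $P[I]$.

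Next I would build a coupled generator on nested pairs $(I,J)$ with $I\subset J$ and $|J\setminus I|=1$. A move of $I$ is matched with a move of $J$ that preserves nesting. If $I$ drops $y$, then $J$ drops the same $y$ when that is allowed, or else $J$ drops the extra element $J\setminus I$. If $I$ adds $z$, then $J$ adds $z$ or $I$ absorbs the extra element. The coupling needs the rate of each marginal move to be dominated by the total rate of compatible moves of the partner. Fishburn's inequality gives the required monotonicity: the probability that $y$ is maximal-last in $P[I]$ is at least the probability in $P[J]$ for $I\subset J$, and dually for first elements in $S$. The unit-total-mass identity $\sum_y e(I\setminus y)/e(I)=1$ supplies the leftover mass to send the unmatched moves to the "extra element" transitions.

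With a generator whose marginals are the two exchange chains, I would take any stationary distribution of the finite coupled chain. Its projections are stationary for the marginal chains, so each equals $\mu_p$ or $\mu_{p+1}$, provided the marginal chains are irreducible on their layers. I expect irreducibility, or at least uniqueness of the stationary law, to require a separate argument. The main obstacle is matching rates so that nesting is preserved exactly while both marginals remain correct. I would first check the rate inequalities pointwise, move by move, using Fishburn's inequality.

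For (2), the identity $N_k-U_k=N_{k+1}-L_{k+1}$ comes from the bijection that swaps $x$ with an adjacent incomparable neighbour. The frequencies are conditional expectations under $\mu_{k-1}$ of normalized cover observables, namely the fraction of extensions of $P[I]$ ending in a lower cover of $x$. Under the coupling $I\subset J$, Fishburn-type monotonicity and averaging over the $k$ versus $k+1$ positions give $(k-1)\ell_k\ge k\ell_{k+1}$, and dually for $u$. Substituting these into $N_{k+1}/N_k=(1-u_k)/(1-\ell_{k+1})$ shows the ratios are nonincreasing. For (3), I would write $D_k$ using the ratios and the two frequency inequalities. Clearing denominators should leave a sum of four nonnegative terms, one of which is $N_kU_k$. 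For (4), I would telescope (C) in $h$ by induction: assuming (T) for $h$, combine it with (C) at $k=a+h$ and with the ratio monotonicity from (2) to obtain the case $h+1$.
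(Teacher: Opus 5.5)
Your proposal is correct and follows the paper's route essentially step for step. The paper uses endpoint-probability exchange chains on each prefix layer (it takes the rate to be exactly $\last_{P[I]}(y)\first_{P[S_I]}(z)$), a coupled generator that routes unmatched excess rates to the moves through the extra element $d$, paid for by Fishburn monotonicity and the unit-mass identities, projection of a stationary law, normalized cover observables with the adjacent-swap identity, a four-term certificate for $D_k$, and telescoping. The steps you leave open are supplied as you anticipate: irreducibility by exchanging an element maximal in $I\setminus K$ for one minimal in $K\setminus I$, and positivity of $N_k-U_k$ by an explicit extension placing a minimal element of $R\setminus J$ right after $x$. In (4) the ratio monotonicity is not needed, since (T) for $h$ divided by $N_aN_{a+h}$ and (C) at $k=a+h$ divided by $N_{a+h}N_{a+h+1}$ simply add.
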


\begin{corollary}\label{cor:CP104}
Conjecture~\ref{conj:CP104} holds for every finite poset $P$ and every element $z\in P$.
\end{corollary}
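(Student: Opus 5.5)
The corollary is a direct specialization of part~(4) of Theorem~\ref{thm:main}, so the plan is simply to match notation and check that nothing is lost in translation. Given a poset $P=(X,\prec)$ on $n$ elements and $z\in X$, I would view it as a finite strict poset $P=(V,<)$ with $V=X$ and $<\,=\,\prec$, and take $x=z$. Then $N(P,z,k)$, the number of linear extensions of $P$ in which $z$ occupies position $k$, is exactly the quantity $N_k$ of Section~\ref{sec:intro}.

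The only point to check is the convention outside the range $1,\dots,n$. The conjecture allows $a,i\ge1$ arbitrary, so indices such as $a-1=0$ or $a+i>n$ may occur. I would note that $N(P,z,k)$ counts an empty set of linear extensions for $k\le0$ or $k>n$, so it vanishes there. This agrees with the convention $N_k=0$ outside $1,\dots,n$ fixed before the statement of~(T).

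With $i=h$, the inequality of Conjecture~\ref{conj:CP104} becomes literally inequality~(T). This holds for all integers $a,h\ge1$ by Theorem~\ref{thm:main}(4). Since that theorem carries no hypothesis on the width of $P$ or on the choice of $x$, the conclusion holds for every finite poset and every element $z$.

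There is no real obstacle here. All the substance lies in Theorem~\ref{thm:main} itself: the prefix inclusion coupling, the cover-frequency monotonicity, property~C with the defect bound $D_k\ge N_kU_k$, and the telescoping from $h=1$ to general $h$. The corollary only requires the bookkeeping identification above.
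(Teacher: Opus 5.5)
Your proposal is correct and is the paper's own argument: the paper sets $z=x$ and $i=h$, under which the conjectured inequality is literally \textup{(T)}, and obtains Corollary~\ref{cor:CP104} directly from Theorem~\ref{thm:main}(4), which carries no width or element hypothesis. The only bookkeeping needed is your check that $N(P,z,k)=0$ outside $1,\dots,n$, which matches the convention the paper fixes in the introduction.
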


The coupling in (1) comes from an explicit Markov chain. For each feasible size $p$, Section~\ref{sec:exchange} defines a reversible \emph{exchange chain} on the size-$p$ feasible prefixes which exchanges $a\in I\cap R$ with $b\in S_I\cap R$ when $a\inc b$, at rate $\last_{P[I]}(a)\first_{P[S_I]}(b)$ (Definition~\ref{def:endpoint}); its unique invariant law is $\mu_p$. Section~\ref{sec:coupled} couples the exchange chains at sizes $p$ and $p+1$ so that inclusion is preserved, and Section~\ref{sec:inclusion} projects a stationary law of the coupled chain to obtain (1).

\subsection{Consequences}
\emph{Stanley's inequality.} By \cite[Proposition~10.6]{ChanPakSurvey}, property~C for $P$ and for its order dual implies Stanley's inequality $N_k^2\ge N_{k-1}N_{k+1}$: if $N_k<N_{k+1}$ one uses \textup{(C)} for $P$, if $N_k<N_{k-1}$ one uses \textup{(C)} for the order dual, and otherwise $N_k$ is a local maximum and the inequality is immediate. (Log-concavity also follows directly from part~(2) of Theorem~\ref{thm:main}.) The proof of Theorem~\ref{thm:main} uses a single external inequality, the factorial form of Fishburn's correlation inequality~\cite{Fishburn1984} (Section~\ref{sec:prelim}). Fishburn's proof uses the Ahlswede--Daykin four functions theorem~\cite{AD78}; the inequality also follows from a theorem of Shepp~\cite[Theorem~2]{Shepp80}, as explained in~\cite[Lemma~10]{Brightwell88} (see \cite[\S7.2]{ChanPakSurvey}). Everything else is the prefix--$x$--suffix factorization of linear extensions, a finite reversible Markov chain on one layer of order ideals, a coupled generator on pairs of nested ideals, the existence of a stationary law for a finite stochastic matrix, and counting by adjacent swaps. Theorem~\ref{thm:main} therefore gives a proof of Stanley's inequality that does not use the Alexandrov--Fenchel inequality, as in Stanley's original proof~\cite{Stanley1981}, or the combinatorial atlas.

\emph{Further conclusions.} Besides \textup{(C)} and \textup{(T)}, the proof gives the coupling itself: for consecutive positions of $x$, the laws of the set of elements preceding $x$ are stochastically ordered with respect to inclusion, in the strong form of a coupling supported on one-element inclusions. Gluing the adjacent couplings gives a simultaneous nested coupling over all feasible ranks (Corollary~\ref{cor:all-ranks}). The proof also gives the monotonicity of the normalized lower- and upper-cover adjacency frequencies, an explicit four-term nonnegative expression for the defect $D_k$ (equation~\eqref{eq:certificate}), and the bound $D_k\ge N_kU_k$.

\subsection{Relation to previous work}
\emph{The conjecture.} We are not aware of a previous proof of Conjecture~\ref{conj:CP104} beyond special cases. The reduction of \cite[Theorem~10.5]{ChanPakSurvey} requires the Cross-Product Conjecture for an auxiliary poset whose width exceeds that of $P$ by one; special cases of the Cross-Product Conjecture are known, including width two~\cite{CPP22CPC}, the case $a=b=1$~\cite{BFT95}, and a vanishing case~\cite{CPP25}.

\emph{Markov chains on order ideals.} Leake, Lindberg and Oveis Gharan~\cite[\S6.2]{LLOG25} view linear extensions as maximal chains in the distributive lattice of order ideals and re-prove Stanley's inequality by local spectral expansion (trickle-down). Their spectral argument treats Stanley's inequality and related first-hit distributions. In contrast, the key step here is an explicit inclusion-preserving coupling of the marked-rank prefix laws.

\emph{Injective proofs.} Chan and Pak~\cite[\S10.9]{ChanPak2024AF} note that Stanley's inequality most likely cannot be proved by a direct injection. The argument here is not an injection: the coupling is a stationary law of a finite chain, and the certificate for the defect is an identity among rational quantities. The rates involve exact extension counts, and the proof establishes neither an efficient sampling algorithm nor a polynomial-time injection. We draw no complexity-theoretic conclusion.

\subsection{Computer checks}
Independently of the proof, we checked inequality \textup{(T)} in exact arithmetic for every element of every poset with at most $10$ elements, and of every width-three poset with at most $12$ elements. We checked the existence of the inclusion coupling of Theorem~\ref{thm:main}(1) by an exact maximum-flow computation for every poset with at most $8$ elements. For the same posets we also checked every step of the construction in exact rational arithmetic: the exchange chains, the coupled generator, and the marginals of its stationary laws. No failures occurred. Details and self-contained scripts reproducing these computations are given in Appendix~\ref{sec:census}.

\subsection{Organization}
Section~\ref{sec:prelim} fixes notation and states Fishburn's inequality. Section~\ref{sec:endpoint} proves the prefix factorization and the monotonicity of endpoint probabilities under a one-element enlargement. Sections~\ref{sec:exchange}--\ref{sec:inclusion} construct the exchange chains, the coupled generator and the prefix inclusion coupling. Section~\ref{sec:deduction} deduces parts (2)--(4) of Theorem~\ref{thm:main}. Appendix~\ref{sec:census} gives details of the computer checks.

\section{Notation and Fishburn's inequality}\label{sec:prelim}

A finite strict poset $(P,<)$ is a finite set with an irreflexive transitive relation. A \emph{linear extension} of $P$ is a sequence $L=(x_1,\dots,x_n)$ listing every element of $P$ exactly once such that $u<v$ implies that $u$ occurs before $v$; $\pos_L(p)$ is the (one-based) index of $p$ in $L$. We write $\LE(P)$ for the set of linear extensions and $e(P)=|\LE(P)|$. All subposets are induced: for $S\subseteq P$, $P[S]$ retains every comparison between elements of $S$, $e(S)=e(P[S])$, and $e(\varnothing)=1$. We write $P-S$ for $P[P\setminus S]$ and $P-v$ for $P-\{v\}$. For sets, $I-a+b$ means $(I\setminus\{a\})\cup\{b\}$, and $I-a=I\setminus\{a\}$. For a real number $t$, write $t_+=\max\{t,0\}$.

An \emph{order ideal} is a subset $I$ such that $z<y\in I$ implies $z\in I$; a \emph{filter} is the complement of an order ideal. We write $a\lessdot b$ ($a$ is \emph{covered} by $b$) if $a<b$ and no $c$ satisfies $a<c<b$; a \emph{lower cover} of $x$ is an element $v\lessdot x$, and an \emph{upper cover} is an element $v$ with $x\lessdot v$. The order dual $P^*$ reverses every comparison and exchanges ideals with filters. Reversing words gives a bijection $\LE(P)\to\LE(P^*)$, so duality preserves extension counts.

\begin{fishburn}[{\cite[Lemma, p.~130]{Fishburn1984}; see also \cite[Theorem~3.1]{ChanPak2023} and \cite[Theorem~7.4]{ChanPakSurvey}}]
For order ideals $A,B$ of a finite poset,
\[
e(A\cup B)\,e(A\cap B)\,|A|!\,|B|!\ \ge\ e(A)\,e(B)\,|A\cup B|!\,|A\cap B|!.
\]
\end{fishburn}
By order duality the same inequality holds for two filters $A,B$.

\Needspace{10\baselineskip}
\section{Endpoint probabilities}\label{sec:endpoint}

\begin{definition}\label{def:endpoint}
Let $H$ be a finite strict poset. For $a\in H$ put $\last_H(a)=e(H-a)/e(H)$ if $a$ is maximal in $H$ and $\last_H(a)=0$ otherwise; for $b\in H$ put $\first_H(b)=e(H-b)/e(H)$ if $b$ is minimal in $H$ and $\first_H(b)=0$ otherwise. For $H=\varnothing$ all endpoint probabilities are zero, and endpoint probabilities are extended by zero outside $H$.
\end{definition}
For nonempty $H$, $\last_H$ is the law of the last element of a uniformly random linear extension of $H$, and $\first_H$ the law of its first element; each sums to one over $H$.

\begin{theorem}\label{thm:endpoint}
Let $J$ be an order ideal of $R$, $I=D\cup J$ and $S=V\setminus(I\cup\{x\})$.
\begin{enumerate}
\item $I$ and $I\cup\{x\}$ are order ideals of $P$. Concatenation $(\pi,\tau)\mapsto\pi\,x\,\tau$ is a bijection from $\LE(P[I])\times\LE(P[S])$ onto the linear extensions of $P$ in which the set of elements preceding $x$ is $I$. In particular this set has $w(I)=e(I)\,e(S)$ elements.
\item The set of elements preceding $x$ in any linear extension is a feasible prefix. The feasible prefix sizes are exactly $|D|,\dots,|D|+|R|$, $N_{p+1}=\sum_{|I|=p}w(I)>0$ for each of them, and $\mu_p(I)=w(I)/N_{p+1}$.
\item Let $I\subset J'=I\cup\{d\}$ be feasible prefixes with $d\in R$; put $p=|I|$ and $q=|S|$. Then $d$ is maximal in $J'$ and minimal in $S$. Write
\begin{align*}
\lambda_a&=\last_{P[I]}(a),& \lambda'_a&=\last_{P[J']}(a)&&(a\in J'),\\
\beta_b&=\first_{P[S]}(b),& \beta'_b&=\first_{P[S-d]}(b)&&(b\in S).
\end{align*}
The endpoint probabilities satisfy
\[
(p+1)\lambda'_a\le p\lambda_a\quad(a\in I),\qquad
(q-1)\beta'_b\ge q\beta_b\quad(b\in S-d).
\]
In particular, $\lambda'_a\le\lambda_a$ and $\beta'_b\ge\beta_b$ on these index sets. When $p=0$ or $q=1$, respectively, the relevant index set is empty. Moreover,
\[
\sum_{a\in I}(\lambda_a-\lambda'_a)=\lambda'_d\quad(p>0),\qquad
\sum_{b\in S-d}(\beta'_b-\beta_b)=\beta_d\quad(q>1).
\]
All summands are nonnegative. These sums include every endpoint label, not only the elements of $R$.
\end{enumerate}
\end{theorem}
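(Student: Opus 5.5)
The plan is to prove parts~(1) and~(2) by direct structural arguments, and part~(3) by a single application of Fishburn's inequality together with its order dual. Part~(3) is the substantive step.

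\emph{Parts (1) and (2).} I first check that $I=D\cup J$ is an order ideal of $P$. Let $z<y\in I$. If $y\in D$, then $z<x$, so $z\in D$. If $y\in J\subseteq R$, then $z>x$ is impossible, since it would force $y>x$. The case $z=x$ is impossible for the same reason. So either $z<x$ and $z\in D$, or $z\inc x$ and then $z\in J$ because $J$ is an ideal of $R$. Adding $x$ keeps the set down-closed, since everything below $x$ lies in $D$.

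Given $\pi\in\LE(P[I])$ and $\tau\in\LE(P[S])$, the word $\pi\,x\,\tau$ respects all relations: a relation $u<v$ with $u$ after $v$ would require $v\in I\cup\{x\}$ and $u\in S$, contradicting down-closure. Conversely, cutting a linear extension at $x$ gives back $(\pi,\tau)$, so the count is $w(I)=e(I)e(S)$.

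For (2), the set of predecessors of $x$ in any linear extension is down-closed. It contains $D$ and avoids every element above $x$, so its trace on $R$ is an ideal of $R$. Since $R$ has ideals of every size $0,\dots,|R|$, the feasible sizes are as claimed. Each of them has $w(I)\ge1$, so $N_{p+1}=\sum_{|I|=p}w(I)>0$ and $\mu_p(I)=w(I)/N_{p+1}$.

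\emph{Part (3), structure.} The element $d$ is maximal in $J'$ because $I$ is an ideal and $d\notin I$. To see that $d$ is minimal in $S$, take $z\in S$ with $z<d$. Then $z\in J'$ because $J'$ is an ideal, which contradicts $z\in S$.

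\emph{Part (3), inequality for $\lambda$.} Take $a\in I$. If $a$ is not maximal in $J'$, then $\lambda'_a=0$ and there is nothing to prove. Otherwise $a$ is also maximal in $I$. Inside the poset $P[J']$, the sets $A=I$ and $B=J'-a$ are order ideals; for $B$ this uses that $a$ is maximal in $J'$, and $a\neq d$. We have
\[
A\cup B=J',\qquad A\cap B=I-a,\qquad |A|=|B|=p,\quad |A\cup B|=p+1,\quad |A\cap B|=p-1.
\]
Fishburn's inequality then gives $e(J')\,e(I-a)\,p!^2\ge e(I)\,e(J'-a)\,(p+1)!\,(p-1)!$. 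This rearranges to $p\,e(I-a)/e(I)\ge(p+1)\,e(J'-a)/e(J')$, which is $(p+1)\lambda'_a\le p\lambda_a$.

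\emph{Part (3), inequality for $\beta$.} The $\beta$ inequality is the dual statement. In $P[S]$, for $b\in S-d$ minimal, the sets $S-d$ and $S-b$ are filters. Their union is $S$ and their intersection is $S-\{b,d\}$, with sizes $q-1,q-1,q,q-2$. Fishburn for filters yields $(q-1)\beta'_b\ge q\beta_b$. If $b$ is not minimal in $S$, then $\beta_b=0$ and the inequality is trivial.

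\emph{Part (3), sum identities.} The identities follow because endpoint laws have total mass one. Since $\lambda_d=0$ (as $d\notin I$) and both $\lambda$ and $\lambda'$ sum to one, we get $\sum_{a\in I}(\lambda_a-\lambda'_a)=1-(1-\lambda'_d)=\lambda'_d$. Symmetrically, $\beta'_d=0$ (as $d\notin S-d$) and both $\beta$ and $\beta'$ sum to one, giving $\sum_{b\in S-d}(\beta'_b-\beta_b)=\beta_d$. Nonnegativity of each summand comes from $\frac{p}{p+1}\le1$ and $\frac{q}{q-1}\ge1$.

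\emph{Main obstacle.} The only delicate point is choosing the right pair of ideals for Fishburn's inequality and checking that they really are ideals of the ambient subposet. This requires that $a$ and $d$ are both maximal in $J'$, and dually that $b$ and $d$ are both minimal in $S$. The degenerate cases $p=0$ and $q=1$ are vacuous.
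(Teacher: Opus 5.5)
Your proposal is correct and follows essentially the same route as the paper: the same structural arguments for (1)--(2), the same pair of ideals $I$, $J'-a$ (you work inside $P[J']$ rather than $P$, which is immaterial) and filters $S-b$, $S-d$ in Fishburn's inequality, and the same unit-mass computation for the sum identities. One small slip: when checking that $\pi\,x\,\tau$ is a linear extension, your case analysis also needs to exclude a relation $x<v$ with $v\in I$, which is immediate because $I$ is an ideal not containing $x$.
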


\begin{proof}
(1) If $y\in D$, every predecessor of $y$ is below $x$, hence in $D$. If $y\in J$ and $z<y$, then $z\ne x$ (else $x<y$) and $z\not>x$ (else $x<y$), so $z\in D$ or $z\in R$, and in the latter case $z\in J$ because $J$ is an ideal of $R$. Thus $I$ is an ideal, and so is $I\cup\{x\}$, since the predecessors of $x$ form $D\subseteq I$. A linear extension with prefix set $I$ before $x$ has the unique form $\pi\,x\,\tau$ with $\pi\in\LE(P[I])$ and $\tau\in\LE(P[S])$. Conversely, every such concatenation is a linear extension: since $I\cup\{x\}$ is an ideal, no element of $S$ lies below an element of $I\cup\{x\}$, and all relations inside the blocks are respected.

(2) Let $I$ be the set of elements preceding $x$ in some linear extension. It is an ideal containing $D$ and avoiding $x$, and it avoids every $y>x$; hence $I=D\cup J$ with $J=I\cap R$, and $J$ is an ideal of $R$. Summing the fibre sizes of (1) gives $N_{p+1}$. Ideals of $R$ have sizes $0,\dots,|R|$, and every size occurs (take the initial segments of a linear extension of $R$), so the feasible sizes are as stated and $N_{p+1}>0$ for each of them. The conditional law is uniform on the $N_{p+1}$ extensions, so $\mu_p(I)=w(I)/N_{p+1}$.

(3) Every predecessor of $d$ lies in the ideal $J'$ and differs from $d$, so it lies in $I$. Hence $d$ is minimal in $S$. If $d<y$ with $y\in J'$, then $y\ne d$, so $y\in I$; also $y\in R$, since $y\in D$ would give $d<x$. As $I\cap R$ is an ideal of $R$, this forces $d\in I$, a contradiction. So $d$ is maximal in $J'$.

Fix $a\in I$. If $a$ is not maximal in $J'$, then $\lambda'_a=0$. Otherwise $a$ is maximal in $I$, and $I$ and $J'-a$ are ideals of $P$ of size $p\ge1$ with union $J'$ and intersection $I-a$. Fishburn's inequality gives
\[
e(J')\,e(I-a)\,(p!)^2\ \ge\ e(I)\,e(J'-a)\,(p+1)!\,(p-1)!,
\]
that is, $\lambda'_a=e(J'-a)/e(J')\le\frac{p}{p+1}\,e(I-a)/e(I)=\frac{p}{p+1}\lambda_a$.

Fix $b\in S-d$ (so $q\ge2$). If $b$ is not minimal in $S$, then $\beta_b=0$. Otherwise $b$ is minimal in $S-d$, and $S-b$ and $S-d$ are filters of $P[S]$ of size $q-1$ with union $S$ and intersection $S-\{b,d\}$. Fishburn's inequality for filters gives
\[
e(S)\,e(S-\{b,d\})\,\bigl((q-1)!\bigr)^2\ \ge\ e(S-b)\,e(S-d)\,q!\,(q-2)!,
\]
that is, $\beta'_b=e(S-\{b,d\})/e(S-d)\ge\frac{q}{q-1}\,e(S-b)/e(S)=\frac{q}{q-1}\beta_b$.

If $p>0$, then $\sum_{a\in I}\lambda_a=1$ and $\sum_{a\in I}\lambda'_a=1-\lambda'_d$, which gives the first identity. If $q>1$, then $\sum_{b\in S-d}\beta'_b=1$ and $\sum_{b\in S-d}\beta_b=1-\beta_d$, which gives the second. The summands are nonnegative by the pointwise inequalities.
\end{proof}

\section{The exchange chain on a prefix layer}\label{sec:exchange}

For a feasible prefix size $p$ let $\mathcal F_p$ be the set of feasible prefixes of size $p$. For $I\in\mathcal F_p$, $a\in I\cap R$ and $b\in S_I\cap R$ with $a\inc b$, define the rate
\begin{equation}\label{eq:rate}
q_p\bigl(I,\,I-a+b\bigr)=\last_{P[I]}(a)\,\first_{P[S_I]}(b),
\end{equation}
set all other off-diagonal rates to zero, and let the diagonal entries be minus the row sums. Positive rates lead to states in $\mathcal F_p$, as checked below. Thus these rates define a generator $Q_p$ directly on prefix sets; no Markov projection of a chain on linear extensions is assumed.

\begin{proposition}\label{prop:exchange}
For each feasible $p$, $Q_p$ is a generator on $\mathcal F_p$ with total exit rates at most one. It is reversible with respect to $\mu_p$, its transition graph is connected, and $\mu_p$ is its unique invariant probability.
\end{proposition}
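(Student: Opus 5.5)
We verify, in order, that the rates are well defined, that their row sums are at most one, that detailed balance holds with respect to $\mu_p$, and that the transition graph is connected. Uniqueness of the invariant law then follows from irreducibility of a finite chain.

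\emph{Well-definedness.} Suppose $q_p(I,I-a+b)>0$. Then $a$ is maximal in $P[I]$ and $b$ is minimal in $P[S_I]$. Since $I$ is an ideal and $a$ is maximal in it, $I-a$ is an ideal of $P$. Adding $b$ keeps it an ideal: any $z<b$ lies in $I\cup\{x\}\cup S_I$, but $z\ne x$ because $b\in R$, and $z\notin S_I$ by minimality of $b$. So $z\in I$, and $z\neq a$ since $a\inc b$. Finally $I-a+b$ contains $D$, so its intersection with $R$ is an ideal of $R$; this is the argument of Theorem~\ref{thm:endpoint}(2) applied to the ideal $I-a+b\subseteq D\cup R$. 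Hence $I-a+b\in\mathcal F_p$.

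\emph{Exit rates.} The total exit rate from $I$ is at most
\[
\sum_{a}\last_{P[I]}(a)\sum_b\first_{P[S_I]}(b)\le 1\cdot 1,
\]
since endpoint probabilities sum to at most one.

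\emph{Reversibility.} Put $I'=I-a+b$, $S=S_I$ and $S'=S_{I'}=S-b+a$. Writing $\mu_p(I)\propto e(I)e(S)$, we have
\[
\mu_p(I)\,q_p(I,I')\propto e(I)e(S)\cdot\frac{e(I-a)}{e(I)}\cdot\frac{e(S-b)}{e(S)}=e(I-a)\,e(S-b).
\]
The reverse move from $I'$ exchanges $b\in I'\cap R$ with $a\in S'\cap R$. Its rate is nonzero exactly when $b$ is maximal in $I'$ and $a$ is minimal in $S'$. These hold whenever the forward rate is nonzero: anything above $b$ in $I'$ would lie in $I$ and be above $b$, contradicting that $b\in S$ and $I$ is an ideal. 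The minimality of $a$ follows symmetrically. The reverse rate then gives $e(I'-b)\,e(S'-a)=e(I-a)\,e(S-b)$. The condition for a positive rate is symmetric, since both reduce to $a$ maximal in $I$ and $b$ minimal in $S$ with $a\inc b$, so detailed balance holds in all cases. Hence $\mu_p$ is invariant.

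\emph{Connectivity (the main obstacle).} The task is to join any two $I,K\in\mathcal F_p$ by positive-rate moves. I would induct on $|I\setminus K|$. If $I\ne K$, choose $a$ maximal in $P[I\setminus K]$ and $b$ minimal in $P[K\setminus I]$; both sets lie in $R$ because $D\subseteq I\cap K$.

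First, $a$ is maximal in $I$: an element $y\in I$ with $y>a$ would lie in $K$, forcing $a\in K$. Dually, $b$ is minimal in $S_I$.

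Second, we need $a\inc b$. We cannot have $b<a$, since $a\notin K$ while $K$ is an ideal containing $b$. If instead $a<b$, then $a\in K$ because $b\in K$, which is again a contradiction.

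So the move $I\to I-a+b$ has positive rate and strictly decreases $|I\setminus K|$. The delicate point was checking that maximality in the difference set gives maximality in $I$ itself, and this is handled by the ideal property of $K$.
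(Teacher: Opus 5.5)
Your proposal follows the paper's proof step for step. It uses the same legality check, the same exit-rate bound, detailed balance through the common value $e(I-a)\,e(S_I-b)$, and connectivity by exchanging a maximal element of $I\setminus K$ for a minimal element of $K\setminus I$. For uniqueness you cite the standard fact for irreducible finite chains, where the paper gives a short maximum-principle argument. That substitution is harmless, because your connectivity induction already produces a directed positive-rate path from any $I$ to any $K$.

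There is one incorrect justification, in the connectivity step. You exclude $b<a$ ``since $a\notin K$ while $K$ is an ideal containing $b$.'' Ideals are closed downward, not upward, so $b\in K$ and $b<a$ say nothing about whether $a\in K$. The correct reason uses the other ideal. If $b<a$, then $a\in I$ and $I$ an ideal give $b\in I$, contradicting $b\in K\setminus I$. This is exactly how the paper argues. With this repair the proof is complete.
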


\begin{proof}
\emph{Legality.} If $q_p(I,I-a+b)>0$, then $a$ is maximal in $I$, $b$ is minimal in $S_I$ and $a\inc b$, so $K=I-a+b$ is again an ideal containing $D$ and avoiding $x$ and every element above $x$; thus $K\in\mathcal F_p$. Conversely, let $I,K\in\mathcal F_p$ with $K=I-a+b$. Then $a,b\in R$, since both prefixes contain $D$ and every feasible prefix lies in $D\cup R$. A successor of $a$ in $I$ would lie in $I-a\subseteq K$ and force $a\in K$, so $a$ is maximal in $I$. Every predecessor of $b$ lies in the ideal $K$ and differs from $a$ and $b$, so it lies in $I$; hence $b$ is minimal in $S_I$. Finally, $a<b$ would force $a\in K$, and $b<a$ would force $b\in I$; so $a\inc b$. By symmetry $b$ is maximal in $K$ and $a$ is minimal in $S_K$. Hence $q_p(I,K)>0$ if and only if $q_p(K,I)>0$. The total exit rate from $I$ is at most $\sum_{a,b}\last_{P[I]}(a)\first_{P[S_I]}(b)\le1$.

\emph{Detailed balance.} With $K=I-a+b$ we have $S_K=S_I-b+a$, and
\[
\begin{aligned}
w(I)\,q_p(I,K)&=e(I)e(S_I)\cdot\frac{e(I-a)}{e(I)}\cdot\frac{e(S_I-b)}{e(S_I)}=e(I-a)\,e(S_I-b)\\
&=e(K-b)\,e(S_K-a)=w(K)\,q_p(K,I).
\end{aligned}
\]
Hence $\mu_p(I)q_p(I,K)=\mu_p(K)q_p(K,I)$ for all $I\ne K$, and $\mu_p$ is invariant.

\emph{Connectivity.} Let $I\ne K$ in $\mathcal F_p$. Choose $a$ maximal in $I\setminus K$ and $b$ minimal in $K\setminus I$. A successor of $a$ in $I$ would lie in $I\setminus K$ (as $K$ is an ideal not containing $a$), contradicting the choice of $a$; so $a$ is maximal in $I$. Every predecessor of $b$ lies in $K$, and not in $K\setminus I$ by minimality, so it lies in $I$; hence $b$ is minimal in $S_I$. Finally $a<b$ would put $a$ in $K$, and $b<a$ would put $b$ in $I$; so $a\inc b$. Thus $I\to I-a+b$ is a positive-rate move, and $I-a+b$ has smaller symmetric difference with $K$. Iterating connects $I$ to $K$.

\emph{Uniqueness.} Let $\nu$ be an invariant probability, $\nu Q_p=0$, and put $h=\nu/\mu_p$. For every $K\in\mathcal F_p$, by detailed balance (which holds trivially on the diagonal),
\[
\begin{aligned}
0&=\sum_I\nu(I)q_p(I,K)
 =\sum_Ih(I)\mu_p(I)q_p(I,K)\\
 &=\mu_p(K)\sum_Iq_p(K,I)h(I)
 =\mu_p(K)\,(Q_ph)(K).
\end{aligned}
\]
Since $\mu_p(K)>0$, we get $Q_ph=0$. At a state $K$ where $h$ is maximal, $0=(Q_ph)(K)=\sum_{I\ne K}q_p(K,I)\bigl(h(I)-h(K)\bigr)$ is a sum of nonpositive terms, so $h(I)=h(K)$ for every neighbour $I$ of $K$. By connectivity $h$ is constant, and normalization gives $\nu=\mu_p$.
\end{proof}

\section{The coupled generator on nested pairs}\label{sec:coupled}

Let $p,p+1$ be consecutive feasible prefix sizes and let
\[
\mathcal C_p=\{(I,J):I\in\mathcal F_p,\ J\in\mathcal F_{p+1},\ I\subset J\}.
\]
Fix $(I,J)\in\mathcal C_p$, write $J=I\cup\{d\}$ with $d\in R$, and put $S=S_I$, so that $S_J=S-d$. Use $\lambda,\lambda',\beta,\beta'$ from Theorem~\ref{thm:endpoint}(3), with $J'=J$. Throughout this section, $a$ ranges over $I\cap R$ and $b$ over $(S-d)\cap R$. The nonspecial exchanges $a\to b$ have lower and upper rates
\[
r_{ab}=\mathbf 1_{a\inc b}\lambda_a\beta_b,
\qquad t_{ab}=\mathbf 1_{a\inc b}\lambda'_a\beta'_b.
\]
Define their excesses and the special rates by
\begin{align*}
E_{ab}&=(r_{ab}-t_{ab})_+,& F_{ab}&=(t_{ab}-r_{ab})_+,\\
\rho_a&=\mathbf 1_{a\inc d}\lambda_a\beta_d,&
\sigma_b&=\mathbf 1_{d\inc b}\lambda'_d\beta'_b.
\end{align*}
Here $\rho_a$ is the lower rate for $a\to d$, and $\sigma_b$ is the upper rate for $d\to b$.

\begin{lemma}[Rate budgets]\label{lem:budget}
For every $b$, $\sum_aE_{ab}\le\sigma_b$; for every $a$, $\sum_bF_{ab}\le\rho_a$.
\end{lemma}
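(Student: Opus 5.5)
Fix $b\in(S-d)\cap R$. We bound $E_{ab}$ termwise, sum over $a$, and then use the mass identities of Theorem~\ref{thm:endpoint}(3).

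Since $t_{ab}\ge0$, we have $E_{ab}\le \mathbf 1_{a\inc b}(\lambda_a\beta_b-\lambda'_a\beta'_b)_+$. By Theorem~\ref{thm:endpoint}(3), $\beta'_b\ge\beta_b$ for $b\in S-d$, and all quantities are nonnegative. Hence
\[
\lambda_a\beta_b-\lambda'_a\beta'_b\le \lambda_a\beta'_b-\lambda'_a\beta'_b=(\lambda_a-\lambda'_a)\beta'_b ,
\]
and $\lambda_a-\lambda'_a\ge0$ for $a\in I$. Summing over $a\in I\cap R$ and then enlarging the sum to all $a\in I$ (the summands are nonnegative) gives
\[
\sum_a E_{ab}\le \beta'_b\sum_{a\in I}(\lambda_a-\lambda'_a)=\beta'_b\,\lambda'_d ,
\]
by the first identity of Theorem~\ref{thm:endpoint}(3). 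This uses $p>0$; if $p=0$, then $I=\varnothing$, the sum over $a$ is empty, and the bound is trivial.

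It remains to compare $\lambda'_d\beta'_b$ with $\sigma_b=\mathbf 1_{d\inc b}\lambda'_d\beta'_b$, so we must handle the indicator. If $d\inc b$ there is nothing more to do. Otherwise $d$ and $b$ are comparable. Now $d$ is maximal in $J$ and minimal in $S$, and $b\in S$, so $d<b$ and $b$ cannot also be minimal in $S$. If $b$ were minimal in $S-d$, we would have $\beta'_b>0$, but $\beta_b=0$. So the bound above is not good enough here, and this indicator case is the main obstacle. The fix is to use the indicator already present in $E_{ab}$. Because $d<b$, $b$ is not minimal in $S$, so $\beta_b=0$, hence $r_{ab}=0$ and $E_{ab}=0$ for every $a$. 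Thus $\sum_aE_{ab}=0\le\sigma_b$. In the case $d\inc b$, the estimate above gives exactly $\sigma_b$.

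The second budget is proved the same way, with roles exchanged. Fix $a\in I\cap R$. Then $F_{ab}\le\mathbf 1_{a\inc b}(\lambda'_a\beta'_b-\lambda_a\beta_b)_+$. Since $\lambda'_a\le\lambda_a$, this is at most $\lambda_a(\beta'_b-\beta_b)$, and $\beta'_b-\beta_b\ge0$. Summing over all $b\in S-d$ and using the second identity of Theorem~\ref{thm:endpoint}(3) gives $\sum_bF_{ab}\le\lambda_a\beta_d$. The case $q=1$ is trivial because the sum is then empty.

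For the indicator: if $a\inc d$, the right-hand side is $\rho_a$. Otherwise $a$ and $d$ are comparable. Since $d$ is maximal in $J$ and $a\in I\subset J$, this forces $a<d$. Then $a$ is not maximal in $J$, so $\lambda'_a=0$, hence $t_{ab}=0$ and $F_{ab}=0$ for all $b$. The bound $\sum_bF_{ab}=0\le\rho_a$ follows.

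Alternatively, the second budget follows from the first by order duality. Duality swaps prefixes with suffixes and last with first, and it maps the pair $(I,J)$ to the nested pair of complements.
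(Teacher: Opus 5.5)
Your proof is correct and is essentially the paper's argument: termwise bounds on $E_{ab}$ and $F_{ab}$ from the endpoint inequalities, summation via the mass identities of Theorem~\ref{thm:endpoint}(3), and the observation that comparability of $d$ with $b$ (resp.\ $a$ with $d$) forces $\beta_b=0$ (resp.\ $\lambda'_a=0$), so the corresponding excesses vanish. The differences are cosmetic: you apply $\beta_b\le\beta'_b$ (resp.\ $\lambda'_a\le\lambda_a$) before summing rather than after, and you handle the indicator by contraposition rather than directly.
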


\begin{proof}
The endpoint inequalities give
\[
E_{ab}\le\mathbf 1_{a\inc b}(\lambda_a-\lambda'_a)\beta_b,
\qquad
F_{ab}\le\mathbf 1_{a\inc b}\lambda'_a(\beta'_b-\beta_b).
\]
Fix $b$. If $E_{ab}>0$ for some $a$, then $p>0$, $q=|S|>1$ and $\beta_b>0$. Thus $b$ and $d$ are distinct minimal elements of $S$, so $d\inc b$. The first endpoint-mass identity gives
\begin{align*}
\sum_{a\in I\cap R}E_{ab}
&\le \beta_b\sum_{a\in I\cap R}(\lambda_a-\lambda'_a)\\
&\le \beta_b\sum_{a\in I}(\lambda_a-\lambda'_a)
 =\beta_b\lambda'_d
 \le\beta'_b\lambda'_d=\sigma_b.
\end{align*}
The second inequality uses nonnegativity of every endpoint difference, including those indexed by $D$. If no $E_{ab}$ is positive, the bound is immediate.

Similarly, fix $a$ with $F_{ab}>0$ for some $b$. Then $p>0$, $q>1$ and $\lambda'_a>0$. The elements $a,d$ are distinct maxima of $J$, so $a\inc d$. The second endpoint-mass identity gives
\begin{align*}
\sum_{b\in(S-d)\cap R}F_{ab}
&\le\lambda'_a\sum_{b\in(S-d)\cap R}(\beta'_b-\beta_b)\\
&\le\lambda'_a\sum_{b\in S-d}(\beta'_b-\beta_b)
 =\lambda'_a\beta_d
 \le\lambda_a\beta_d=\rho_a.
\end{align*}
Again, a zero excess family causes no difficulty. Thus the available \emph{legal} special moves have enough rate to absorb both kinds of excess.
\end{proof}

Define the off-diagonal rates of $A$ out of $(I,J)$ by the table below. Use rows (i)--(iii) once for each pair $(a,b)$, row (iv) once for each $a$, and row (v) once for each $b$. Omit zero rates, add rates when destinations coincide, and set the diagonal entry to minus the row sum:
\begin{center}
\begin{tabular}{lll}
\toprule
& destination & rate\\
\midrule
(i) & $(I-a+b,\ J-a+b)$ & $\min(r_{ab},t_{ab})$\\
(ii) & $(I-a+b,\ J-d+b)$ & $E_{ab}$\\
(iii) & $(I-a+d,\ J-a+b)$ & $F_{ab}$\\
(iv) & $(I-a+d,\ J)$ & $\rho_a-\sum_bF_{ab}$\\
(v) & $(I,\ J-d+b)$ & $\sigma_b-\sum_aE_{ab}$\\
\bottomrule
\end{tabular}
\end{center}

\begin{proposition}\label{prop:coupled}
$A$ is a generator on $\mathcal C_p$, with total exit rates at most two. For every function $f$ on $\mathcal F_p$ and $g$ on $\mathcal F_{p+1}$,
\begin{equation}\label{eq:marginal}
A(f\circ\mathrm{pr}_1)=(Q_pf)\circ\mathrm{pr}_1,\qquad A(g\circ\mathrm{pr}_2)=(Q_{p+1}g)\circ\mathrm{pr}_2,
\end{equation}
where $\mathrm{pr}_1(I,J)=I$ and $\mathrm{pr}_2(I,J)=J$.
\end{proposition}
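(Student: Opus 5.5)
The proof has three parts: legality of the moves, the exit-rate bound, and the two marginal identities \eqref{eq:marginal}.

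\emph{Legality and nonnegativity.} First I would check that every positive-rate destination lies in $\mathcal C_p$. Rows (iv) and (v) have nonnegative rates by Lemma~\ref{lem:budget}, and rows (i)--(iii) are nonnegative by definition. For the destinations:
\begin{itemize}
\item Row (i): $r_{ab}>0$ and $t_{ab}>0$ give legal moves $I\to I-a+b$ and $J\to J-a+b$ (Proposition~\ref{prop:exchange}), and nesting is preserved.
\item Row (ii): $E_{ab}>0$ forces $r_{ab}>0$ and, as in the proof of the lemma, $d\inc b$ with $\beta_b>0$. I still need $J-d+b$ to be a feasible prefix. The budget proof gives $\lambda'_d>0$ from $\sigma_b\ge\sum_aE_{ab}>0$, so $d$ is maximal in $J$, and $b$ is minimal in $S_J=S-d$. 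So $J\to J-d+b$ is a legal move at size $p+1$. Also $I-a+b\subset J-d+b$, since $a\ne d$ and $a\in I$.
\item Row (iii): symmetric to row (ii), using $\lambda_a>0$, $\beta_d>0$ and $a\inc d$. The move $I\to I-a+d$ is legal, and $I-a+d\subset J-a+b$.
\item Rows (iv) and (v): these are single legal exchanges, with the other coordinate held fixed. Inclusion holds because $I-a+d\subset J$ and $I\subset J-d+b$ (recall $b\notin I$).
\end{itemize}

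\emph{Exit rates.} The total exit rate collapses by construction. Using $\min(r,t)+E=r$ and $\min(r,t)+F=t$, the sum of rows (i)--(v) equals
\[
\sum_{a,b}\max(r_{ab},t_{ab})+\sum_a\rho_a+\sum_b\sigma_b-\sum_{a,b}\min(r_{ab},t_{ab}),
\]
which is at most $\sum(r+\rho)+\sum(t+\sigma)$. The first of these is the exit rate of $Q_p$ from $I$ and the second is that of $Q_{p+1}$ from $J$, each at most $1$ by Proposition~\ref{prop:exchange}. So the total is at most $2$.

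\emph{Marginals.} This is the main bookkeeping step. For $f\circ\mathrm{pr}_1$ I would group the moves by their first-coordinate destination.
\begin{itemize}
\item $I-a+b$ with $b\ne d$ is reached by rows (i) and (ii), with combined rate $\min(r_{ab},t_{ab})+E_{ab}=r_{ab}$. This is $q_p(I,I-a+b)$, because $\lambda_a\beta_b\mathbf 1_{a\inc b}$ is exactly the rate in \eqref{eq:rate}.
\item $I-a+d$ is reached by rows (iii) and (iv), with combined rate $\sum_bF_{ab}+\rho_a-\sum_bF_{ab}=\rho_a=q_p(I,I-a+d)$.
\item Row (v) leaves $I$ fixed, so it contributes nothing to $A(f\circ\mathrm{pr}_1)$.
\end{itemize}
Every exchange of $Q_p$ out of $I$ has $b\in S_I\cap R=((S-d)\cap R)\cup\{d\}$, so each is matched exactly once.

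The second coordinate is handled the same way. The destination $J-a+b$ comes from rows (i) and (iii), with rate $\min+F=t_{ab}=q_{p+1}(J,J-a+b)$. The destination $J-d+b$ comes from rows (ii) and (v), with rate $\sigma_b=q_{p+1}(J,J-d+b)$. Row (iv) fixes $J$. The elements of $J\cap R$ are the $a\in I\cap R$ together with $d$, which accounts for every move of $Q_{p+1}$.

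The only subtlety is that distinct rows sharing a destination in the pair space are merely added, which does not affect these sums. I expect the main care to be needed in the legality checks for rows (ii) and (iii), where positivity of the excess must be shown to imply the required maximality and minimality of $d$.
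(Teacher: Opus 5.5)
Your proposal is correct and follows the paper's proof essentially step for step. You check legality through positive-rate coordinate moves of $Q_p$ and $Q_{p+1}$, and you recover $r_{ab},\rho_a$ and $t_{ab},\sigma_b$ by aggregating joint rates by each coordinate's destination.

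There is one slip in the exit-rate step. Rows (iv) and (v) subtract exactly the $\sum_{a,b}F_{ab}$ and $\sum_{a,b}E_{ab}$ contributed by rows (iii) and (ii). So the total exit rate is $\sum_{a,b}\min(r_{ab},t_{ab})+\sum_a\rho_a+\sum_b\sigma_b$, not the expression you wrote. This is still at most $\sum(r+\rho)+\sum(t+\sigma)\le 2$, so the conclusion is unaffected. The paper reaches the same bound by noting that every joint move changes a coordinate and is counted in that coordinate's marginal exit rate.
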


\begin{proof}
All rates are nonnegative by Lemma~\ref{lem:budget}.

\emph{Destinations lie in $\mathcal C_p$.} Consider a row with positive assigned rate. Its coordinate moves are positive-rate moves of $Q_p$ or $Q_{p+1}$: in (i) both $r_{ab}$ and $t_{ab}$ are positive; in (ii) $r_{ab}>0$, and the upper move $d\to b$ has rate $\sigma_b>0$, because $\sigma_b\ge\sum_aE_{ab}>0$; in (iii) $t_{ab}>0$, and the lower move $a\to d$ has rate $\rho_a\ge\sum_bF_{ab}>0$; in (iv) and (v) the moves have rates $\rho_a>0$ and $\sigma_b>0$. By Proposition~\ref{prop:exchange} the coordinates of every destination are feasible prefixes. Inclusion is preserved: in (i) $J-a+b=(I-a+b)\cup\{d\}$; in (ii) $J-d+b=I\cup\{b\}\supset I-a+b$; in (iii) $J-a+b=(I-a+d)\cup\{b\}$; in (iv) $J=(I-a+d)\cup\{a\}$; in (v) $J-d+b=I\cup\{b\}$.

\emph{Marginals.} Group the joint moves by their effect on the first coordinate. The lower move $a\to b$ ($b\ne d$) occurs in (i) and (ii), with total rate $\min(r_{ab},t_{ab})+E_{ab}=r_{ab}$. The lower move $a\to d$ occurs in (iii) and (iv), with total rate $\sum_bF_{ab}+\rho_a-\sum_bF_{ab}=\rho_a$. Row (v) leaves $I$ fixed. Since distinct lower moves lead to distinct sets, for $f$ on $\mathcal F_p$,
\[
\begin{aligned}
A(f\circ\mathrm{pr}_1)(I,J)
 &=\sum_{(I',J')}A\bigl((I,J),(I',J')\bigr)\bigl(f(I')-f(I)\bigr)\\
 &=\sum_{I'}q_p(I,I')\bigl(f(I')-f(I)\bigr)
 =(Q_pf)(I),
\end{aligned}
\]
where joint moves that fix $I$ contribute zero. This includes the diagonal, since both generators have zero row sums. Similarly, the upper move $a\to b$ occurs in (i) and (iii), with total rate $\min(r_{ab},t_{ab})+F_{ab}=t_{ab}$; the upper move $d\to b$ occurs in (ii) and (v), with total rate $\sigma_b$; and row (iv) fixes $J$. This gives the second identity.

The total exit rate of $A$ at $(I,J)$ is at most the sum of the exit rates of $Q_p$ at $I$ and $Q_{p+1}$ at $J$, since every joint move changes at least one coordinate and is counted in the corresponding marginal rate. So it is at most two.
\end{proof}

\begin{remark}
In the boundary cases the table degenerates. If $p=0$, then $I=\varnothing$, the lower chain has no moves, and only row (v) can occur. If $|S|=1$, then $S=\{d\}$, the upper chain has no moves, and only row (iv) can occur. In both cases all $E_{ab}$ and $F_{ab}$ vanish, so the mass identities of Theorem~\ref{thm:endpoint}(3), requiring $p>0$ and $q>1$, respectively, are not needed. The coupling is not a coordinatewise domination of exchange rates: in general both $E$ and $F$ are nonzero at the same state.
\end{remark}

\section{The prefix inclusion coupling}\label{sec:inclusion}

\begin{theorem}\label{thm:inclusion}
For every finite poset $P$, every $x\in P$ and every two consecutive feasible prefix sizes $p,p+1$, there is a probability measure $\pi$ on $\mathcal C_p$ whose marginals are $\mu_p$ and $\mu_{p+1}$. That is, the laws of the set of elements preceding $x$, at the two consecutive positions $p+1$ and $p+2$, admit a coupling supported on one-element inclusions.
\end{theorem}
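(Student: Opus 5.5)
The plan is to take $\pi$ to be any stationary law of the coupled generator $A$ from Proposition~\ref{prop:coupled} and to identify its marginals using the marginal identities \eqref{eq:marginal} together with the uniqueness statement of Proposition~\ref{prop:exchange}.

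\emph{Step 1: the state space is nonempty and a stationary law exists.} I will first check that $\mathcal C_p\neq\varnothing$. Take any linear extension $L$ of $P$ with $x$ at position $p+2$, which exists because $N_{p+2}>0$ by Theorem~\ref{thm:endpoint}(2). Let $J$ be its prefix set, so $J\in\mathcal F_{p+1}$. Choose $d$ maximal in $J\cap R$; this set is nonempty since $|J|=p+1>|D|$. Then $I=J-d$ is again an ideal containing $D$, so $(I,J)\in\mathcal C_p$.

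Since $A$ has total exit rates at most two, $M=\Id+\tfrac12A$ is a stochastic matrix on the finite set $\mathcal C_p$. A finite stochastic matrix has an invariant probability vector, for instance a limit point of the Ces\`aro averages of $\delta_{(I,J)}M^t$, or by Perron--Frobenius/Brouwer. Any such $\pi$ satisfies $\pi M=\pi$, that is, $\pi A=0$.

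\emph{Step 2: the marginals are invariant.} Let $\nu_1=\pi\circ\mathrm{pr}_1^{-1}$. For any $f$ on $\mathcal F_p$, the first identity in \eqref{eq:marginal} gives
\[
\sum_I\nu_1(I)(Q_pf)(I)=\sum_{(I,J)}\pi(I,J)\,A(f\circ\mathrm{pr}_1)(I,J)=(\pi A)(f\circ\mathrm{pr}_1)=0.
\]
Taking $f$ to be indicator functions yields $\nu_1Q_p=0$. So $\nu_1$ is an invariant probability for $Q_p$, and Proposition~\ref{prop:exchange} gives $\nu_1=\mu_p$. The same argument with $\mathrm{pr}_2$ and $Q_{p+1}$ gives $\nu_2=\mu_{p+1}$.

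\emph{Step 3: support.} Since $\pi$ lives on $\mathcal C_p$, every pair in its support satisfies $I\subset J$ with $|J\setminus I|=1$. This is the claimed coupling.

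I do not expect a serious obstacle, because the heavy lifting (the rate budgets and the marginal identities) is already in Lemma~\ref{lem:budget} and Proposition~\ref{prop:coupled}. The only points needing care are the two just handled. First, $\mathcal C_p$ must be nonempty, so that a stationary probability exists at all. Second, the passage from $A$-stationarity to $Q$-stationarity of the marginals should use test functions and so be stated in the dual form above; this avoids any assumption that the projection of the coupled chain is Markov, an assumption the identities \eqref{eq:marginal} make unnecessary. Irreducibility of $A$ is not needed, since uniqueness is only used for the marginal chains.
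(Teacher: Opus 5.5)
Your proposal is correct and follows the paper's proof essentially step for step: a stationary law of the stochastic matrix $\Id+A/2$ exists by a Ces\`aro argument, its marginals are annihilated by $Q_p$ and $Q_{p+1}$ via \eqref{eq:marginal} tested against indicators, and uniqueness in Proposition~\ref{prop:exchange} identifies them as $\mu_p$ and $\mu_{p+1}$. The only cosmetic difference is how you show $\mathcal C_p\ne\varnothing$: you delete a maximal element $d$ of $J\cap R$ from a size-$(p+1)$ prefix, which is valid since no element of $D$ lies above $d\in R$, whereas the paper extends a size-$p$ prefix by a minimal element of its complement in $R$.
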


\begin{proof}
The set $\mathcal C_p$ is nonempty: an ideal of $R$ of size $p-|D|$ can be extended by a minimal element of its complement in $R$, which is nonempty since $p+1$ is feasible. By Proposition~\ref{prop:coupled}, $K=\Id+A/2$ is a stochastic matrix on the finite set $\mathcal C_p$. For any probability row vector $\eta$, the Ces\`aro averages $\frac1m\sum_{j=0}^{m-1}\eta K^j$ have a convergent subsequence, and each average differs from its image under $K$ by $(\eta-\eta K^m)/m\to0$. Hence a subsequential limit $\pi$ satisfies $\pi K=\pi$, that is, $\pi A=0$.

Let $\pi_1=\pi\circ\mathrm{pr}_1^{-1}$ be the first marginal. For every $f$ on $\mathcal F_p$, by \eqref{eq:marginal},
\[
\sum_I\pi_1(I)(Q_pf)(I)=\sum_{(I,J)}\pi(I,J)\,A(f\circ\mathrm{pr}_1)(I,J)=\bigl(\pi A\bigr)(f\circ\mathrm{pr}_1)=0.
\]
Taking $f$ to be indicator functions shows $\pi_1Q_p=0$, so $\pi_1=\mu_p$ by Proposition~\ref{prop:exchange}. In the same way the second marginal is $\mu_{p+1}$.
\end{proof}

\begin{remark}
The proof uses neither irreducibility of $A$ nor uniqueness of its invariant law: every invariant law of $A$ projects to $\mu_p$ and $\mu_{p+1}$. All entries of $A$ are rational, so a rational coupling exists, for instance an invariant law of $K$ restricted to one of its closed communicating classes. No width hypothesis was used. Although the generator was constructed for two adjacent layers, its stationary couplings can be glued across all feasible sizes, as follows.
\end{remark}

\begin{corollary}[Simultaneous nested coupling]\label{cor:all-ranks}
Put $p_-=|D|$ and $p_+=|D|+|R|$. There are jointly distributed feasible prefixes
\[
I_{p_-}\subset I_{p_-+1}\subset\cdots\subset I_{p_+}
\]
with $I_p\sim\mu_p$ for every feasible size $p$. Consequently, the conditional expectation of any inclusion-increasing function of the prefix set is nondecreasing in the rank of $x$.
\end{corollary}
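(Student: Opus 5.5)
The plan is to glue the adjacent couplings of Theorem~\ref{thm:inclusion} along their shared marginals, in the standard Markov-chain (conditional-probability) way. For each $p$ with $p_-\le p<p_+$, Theorem~\ref{thm:inclusion} gives a probability measure $\pi_p$ on $\mathcal C_p$ with marginals $\mu_p$ and $\mu_{p+1}$. Since $\mu_p(I)>0$ for every $I\in\mathcal F_p$ (each $w(I)>0$ by Theorem~\ref{thm:endpoint}), we can define the transition kernels
\[
\kappa_p(I,J)=\pi_p(I,J)/\mu_p(I),\qquad I\in\mathcal F_p,\ J\in\mathcal F_{p+1}.
\]
Each $\kappa_p(I,\cdot)$ is a probability on $\mathcal F_{p+1}$, supported on sets $J\supset I$ with $|J\setminus I|=1$. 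If $p_-=p_+$, that is $R=\varnothing$, there is nothing to prove.

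Next, we define the joint law on chains $(I_{p_-},\dots,I_{p_+})$ by
\[
\Prob(I_{p_-}=K_{p_-},\dots,I_{p_+}=K_{p_+})=\mu_{p_-}(K_{p_-})\prod_{p=p_-}^{p_+-1}\kappa_p(K_p,K_{p+1}).
\]
This is a probability measure, being a finite Markov chain started from $\mu_{p_-}$. The marginal claim $I_p\sim\mu_p$ follows by induction on $p$. If $I_p\sim\mu_p$, then
\[
\Prob(I_{p+1}=J)=\sum_I\mu_p(I)\kappa_p(I,J)=\sum_I\pi_p(I,J)=\mu_{p+1}(J),
\]
because the second marginal of $\pi_p$ is $\mu_{p+1}$. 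Inclusion holds almost surely since each kernel is supported on one-element enlargements. This gives the nested sequence.

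For the consequence, take $f$ inclusion-increasing, meaning $I\subseteq J$ implies $f(I)\le f(J)$. Then $f(I_p)\le f(I_{p+1})$ pointwise in the coupling, and taking expectations gives $\E_{\mu_p}f\le\E_{\mu_{p+1}}f$. Since $\mu_p$ is exactly the conditional law of the prefix set given $\pos(x)=p+1$ (Theorem~\ref{thm:endpoint}(2)), the conditional expectation is nondecreasing in the rank of $x$ over the feasible ranks $p_-+1,\dots,p_++1$.

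There is no real obstacle here. The only point needing care is that $\mu_p$ has full support on $\mathcal F_p$, so the conditional kernels are well defined; this follows from $e(I)e(S_I)>0$. Alternatively, one can define $\kappa_p(I,\cdot)$ arbitrarily where $\mu_p(I)=0$, since those states carry no mass.
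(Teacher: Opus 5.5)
Your proposal is correct and is essentially the paper's proof. The paper also defines the kernels $T_p(I,J)=\pi^{(p)}(I,J)/\mu_p(I)$, which are well defined because every $\mu_p(I)>0$, runs them in succession from $\mu_{p_-}$ with the same marginal induction, and reads off both the nesting and the monotonicity of expectations of increasing functions from the one-element-enlargement support of each kernel.
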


\begin{proof}
For each $p_-\le p<p_+$, choose a coupling $\pi^{(p)}$ from Theorem~\ref{thm:inclusion} and extend it by zero off $\mathcal C_p$. Since every $\mu_p(I)$ is positive, the formula
\[
T_p(I,J)=\frac{\pi^{(p)}(I,J)}{\mu_p(I)}
\]
defines a probability transition kernel from $\mathcal F_p$ to $\mathcal F_{p+1}$. Start with $I_{p_-}\sim\mu_{p_-}$ and use these kernels successively. Induction gives
\[
\Prob(I_{p+1}=J)=\sum_{I\in\mathcal F_p}\mu_p(I)T_p(I,J)
=\sum_{I\in\mathcal F_p}\pi^{(p)}(I,J)=\mu_{p+1}(J).
\]
Every positive transition adds one element, proving nesting. An increasing function is therefore nondecreasing along each sampled chain, which proves the expectation assertion. If $R=\varnothing$, there is only one size and the assertion is immediate.
\end{proof}

For example, for any fixed $y\ne x$,
\[
k\longmapsto\Prob\bigl(\pos_L(y)<\pos_L(x)\mid\pos_L(x)=k\bigr)
\]
is nondecreasing on the feasible ranks. The glued coupling concerns prefix sets; it makes no claim that the induced order of the labels added from $R$ is uniform on $\LE(R)$.

\begin{remark}
By Strassen's theorem in its finite form (equivalently, max-flow/min-cut), Theorem~\ref{thm:inclusion} is equivalent to the weighted Hall inequalities $\mu_p(X)\le\mu_{p+1}(N^+(X))$ for every set $X\subseteq\mathcal F_p$, where $N^+(X)$ is the set of $J\in\mathcal F_{p+1}$ containing some member of $X$. This is the form tested in Appendix~\ref{sec:census}.
\end{remark}

\section{Proof of Theorem~\ref{thm:main}}\label{sec:deduction}

Part (1) is Theorem~\ref{thm:inclusion}. We now deduce parts (2)--(4). Positions of $x$ are ranks $k=p+1$, so the coupling of Theorem~\ref{thm:inclusion} couples the prefix laws at ranks $k$ and $k+1$; we write $\pi_k$ for it.

\subsection{Cover observables}

\emph{Lower covers.} A lower cover $v$ of $x$ lies in $D$, hence in every feasible prefix. For a feasible prefix $I$ define
\[
\Phi(I)=\sum_{v\lessdot x}|I|\,\last_{P[I]}(v).
\]
Let $I\subset J=I\cup\{d\}$ be feasible. If $v$ is not maximal in $J$, its term at $J$ vanishes. If $v$ is maximal in $J$, it is maximal in $I$, and Theorem~\ref{thm:endpoint}(3) (with $p=|I|\ge1$) gives $|J|\last_{P[J]}(v)\le|I|\last_{P[I]}(v)$. Hence $\Phi(I)\ge\Phi(J)$.

Conditional on the prefix set $I$, the arrangement of $I$ is uniform by Theorem~\ref{thm:endpoint}(1), so its last element, which is the immediate predecessor of $x$, has law $\last_{P[I]}$. An immediate predecessor of $x$ that is comparable to $x$ is a lower cover. At an empty prefix both $\Phi$ and $L_1$ are zero. Therefore
\[
\E\bigl[\Phi(I)\bigm|\pos_L(x)=k\bigr]=(k-1)\,\frac{L_k}{N_k}.
\]
Integrating $\Phi(I)-\Phi(J)\ge0$ against $\pi_k$ gives, for consecutive ranks with $N_k,N_{k+1}>0$,
\begin{equation}\label{eq:lower}
(k-1)L_kN_{k+1}-kL_{k+1}N_k=N_kN_{k+1}\sum_{(I,J)}\pi_k(I,J)\bigl[\Phi(I)-\Phi(J)\bigr]\ \ge\ 0,
\end{equation}
that is, $(k-1)\ell_k\ge k\ell_{k+1}$.

\emph{Upper covers.} An upper cover $v$ of $x$ lies in every suffix $S_I$. Define
\[
\Psi(S_I)=\sum_{x\lessdot v}|S_I|\,\first_{P[S_I]}(v).
\]
Let $I\subset J=I\cup\{d\}$ be feasible and $S=S_I$, so $S_J=S-d$. If $|S|=1$, then rank $n$ is feasible, so $x$ is maximal and has no upper covers, and $\Psi=0$. Let $|S|\ge2$. If $v$ is not minimal in $S$, its term at $S$ vanishes. If $v$ is minimal in $S$, then $v\ne d$ (as $v\notin R$) and Theorem~\ref{thm:endpoint}(3) gives $(|S|-1)\first_{P[S-d]}(v)\ge|S|\first_{P[S]}(v)$. Hence $\Psi(S_J)\ge\Psi(S_I)$. As before, the first element of the suffix is the immediate successor of $x$, and
\[
\E\bigl[\Psi(S_I)\bigm|\pos_L(x)=k\bigr]=(n-k)\,\frac{U_k}{N_k},
\]
so integrating against $\pi_k$ gives
\begin{equation}\label{eq:upper}
(n-k)u_k\ \le\ (n-k-1)u_{k+1}.
\end{equation}

\subsection{The adjacent-swap identity}
Let $1\le k<n$. A linear extension with $x$ at position $k$ whose immediate successor $y$ is incomparable with $x$ can be mapped to one with $x$ at position $k+1$ by swapping $x$ and $y$; the image has immediate predecessor $y$ incomparable with $x$, and the map is a bijection between these two sets. An immediate neighbour of $x$ that is comparable to $x$ is a cover. Hence, for $1\le k<n$,
\begin{equation}\label{eq:swap}
N_k-U_k=N_{k+1}-L_{k+1}.
\end{equation}
If ranks $k$ and $k+1$ are both feasible, this common value is positive. Indeed, let $J$ be an ideal of $R$ with $|D|+|J|=k-1$, and $y$ a minimal element of $R\setminus J$, which is nonempty since rank $k+1$ is feasible. The predecessors of $y$ lie in $D$ or in $J$ (none is $x$ or above $x$, as $y\inc x$), so $D\cup J\cup\{x,y\}$ is an ideal. A linear extension of $D\cup J$, followed by $x$, $y$ and a linear extension of the rest, is a linear extension with $x$ at rank $k$ followed by the incomparable element $y$. Dividing \eqref{eq:swap} by $N_k$ and $N_{k+1}$ gives
\begin{equation}\label{eq:ratio}
\frac{N_{k+1}}{N_k}=\frac{1-u_k}{1-\ell_{k+1}},
\end{equation}
with positive numerator and denominator. By \eqref{eq:lower}, $\ell_k\ge\frac{k}{k-1}\ell_{k+1}\ge\ell_{k+1}$ for $k\ge2$, and for $k=1$ rank $1$ is feasible only if $x$ is minimal, in which case all $L_j$ vanish. By \eqref{eq:upper}, $u_k\le u_{k+1}$. Hence the ratios \eqref{eq:ratio} are nonincreasing. This proves part~(2).

\subsection{The defect certificate}
For $2\le k<n$, the swap identity at $k-1$ and $k$ gives $L_k-U_{k-1}=N_k-N_{k-1}$ and $U_k-L_{k+1}=N_k-N_{k+1}$. Substitution yields the exact identity
\begin{equation}\label{eq:brackets}
\begin{aligned}
&\bigl[(k-1)L_kN_{k+1}-kL_{k+1}N_k\bigr]+\bigl[kU_kN_k-(k-1)U_{k-1}N_{k+1}\bigr]\\
&\qquad=(k-1)N_{k+1}(L_k-U_{k-1})+kN_k(U_k-L_{k+1})\\
&\qquad=(k-1)N_{k+1}(N_k-N_{k-1})+kN_k(N_k-N_{k+1})=D_k,
\end{aligned}
\end{equation}
When $n\ge2$, the same identity holds at $k=1$: the terms multiplied by $k-1$ vanish, and \eqref{eq:swap} at $k=1$ supplies $U_1-L_2=N_1-N_2$. Accordingly, \eqref{eq:brackets} holds for $1\le k<n$. The cases $k\ge n$, including the singleton poset, are treated directly in the zero-boundary argument below.

Suppose $N_{k-1},N_k,N_{k+1}>0$, and put
\[
r=\frac{N_k}{N_{k+1}},\quad r_0=\frac{N_{k-1}}{N_k},\quad u=u_k,\quad u_0=u_{k-1},\quad \ell=\ell_k,\quad \ell_1=\ell_{k+1},
\]
\[
\Delta u=u-u_0,\qquad \Delta\ell=\ell-\ell_1,\qquad H=(k-1)\ell-k\ell_1,\qquad Q=r-r_0.
\]
By part (2), $H\ge0$, $\Delta\ell\ge0$ and $\Delta u\ge0$ (the latter from \eqref{eq:upper} at $k-1$: $u_0\le\frac{n-k}{n-k+1}u\le u$). By \eqref{eq:ratio} at $k$ and at $k-1$, $r=(1-\ell_1)/(1-u)$ and $r_0=(1-\ell)/(1-u_0)$, with $1-u>0$ and $1-u_0>0$. Writing $1-\ell_1=(1-\ell)+\Delta\ell$ and $1-u_0=(1-u)+\Delta u$,
\[
Q=\frac{(1-\ell_1)(1-u_0)-(1-\ell)(1-u)}{(1-u)(1-u_0)}=\frac{\Delta\ell\,(1-u_0)+\Delta u\,(1-\ell)}{(1-u)(1-u_0)}\ \ge\ 0.
\]
Dividing \eqref{eq:brackets} by $N_kN_{k+1}$, and using $L_k/N_k=\ell$, $L_{k+1}/N_{k+1}=\ell_1$, $U_k/N_{k+1}=ur$ and $U_{k-1}/N_k=u_0r_0$, we get
\[
\frac{D_k}{N_kN_{k+1}}=H+kur-(k-1)u_0r_0.
\]
Since $kur-(k-1)u_0r_0=ur+(k-1)r(u-u_0)+(k-1)u_0(r-r_0)$, this is
\begin{equation}\label{eq:certificate}
\frac{D_k}{N_kN_{k+1}}=H+ur+(k-1)r\,\Delta u+(k-1)u_0\,Q.
\end{equation}
All four terms are nonnegative, and $D_k\ge N_kN_{k+1}\,ur=N_kU_k$.

\subsection{Zero boundaries}
By Theorem~\ref{thm:endpoint}(2), the support of $(N_k)$ is the interval of ranks $|D|+1,\dots,|D|+|R|+1$. If $N_{k+1}=0$, then $D_k=kN_k^2\ge N_kU_k$ because $U_k\le N_k$. If $N_k=0$, then $N_{k-1}N_{k+1}=0$ by interval support, and $D_k=0=N_kU_k$. If $N_{k-1}=0<N_kN_{k+1}$, then $U_{k-1}=0$, so by \eqref{eq:brackets} and \eqref{eq:lower}, $D_k=\bigl[(k-1)L_kN_{k+1}-kL_{k+1}N_k\bigr]+kU_kN_k\ge kN_kU_k\ge N_kU_k$. This covers $k=1$. Together with \eqref{eq:certificate}, this proves part~(3) for every $k\ge1$.

\subsection{Telescoping}
Let $a,h\ge1$. If $N_aN_{a+h}=0$, the right side of \textup{(T)} vanishes: either $N_{a+h}=0$, or $N_a=0<N_{a+h}$, in which case interval support gives $N_{a-1}=0$. Otherwise all $N_j$ with $a\le j\le a+h$ are positive. Dividing property~C at $k$ by $N_kN_{k+1}$ gives $kN_k/N_{k+1}-(k-1)N_{k-1}/N_k\ge1$, and summing over $k=a,\dots,a+h-1$ gives
\[
(a+h-1)\frac{N_{a+h-1}}{N_{a+h}}-(a-1)\frac{N_{a-1}}{N_a}\ \ge\ h.
\]
Multiplying by $N_aN_{a+h}$ gives \textup{(T)}. This proves part~(4), and with it Theorem~\ref{thm:main} and Corollary~\ref{cor:CP104}. \qed

\subsection{The flat equality case of Stanley's inequality}
For completeness, we record explicitly the equality consequence noted in \cite[Proposition~10.6]{ChanPakSurvey}.

\begin{corollary}\label{cor:stanley-equality}
If $N_k>0$, then
\[
N_k^2=N_{k-1}N_{k+1}
\quad\Longleftrightarrow\quad
N_{k-1}=N_k=N_{k+1}.
\]
\end{corollary}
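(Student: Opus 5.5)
The easy direction is immediate: if $N_{k-1}=N_k=N_{k+1}$ then both sides equal $N_k^2$. For the converse we assume $N_k>0$ and $N_k^2=N_{k-1}N_{k+1}$. Equality forces $N_{k-1}N_{k+1}=N_k^2>0$, so $N_{k-1},N_k,N_{k+1}$ are all positive. We then use property~C for $P$ and for its order dual $P^*$, following the route of \cite[Proposition~10.6]{ChanPakSurvey}.

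\emph{Step 1: $N_{k+1}\le N_k$.} Suppose instead $N_{k+1}>N_k$. Write (C) at $k$ as
\[
kN_k^2-(k-1)N_{k-1}N_{k+1}\ge N_kN_{k+1}.
\]
Substituting $N_{k-1}N_{k+1}=N_k^2$ turns the left side into $N_k^2$, so $N_k\ge N_{k+1}$, a contradiction.

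\emph{Step 2: $N_{k-1}\le N_k$.} In $P^*$ the element $x$ sits at rank $k^*=n+1-k$, and the counts satisfy $N^*_j=N_{n+1-j}$. The dual counts $N^*_{k^*-1}=N_{k+1}$, $N^*_{k^*}=N_k$, $N^*_{k^*+1}=N_{k-1}$ obey the same equality. Repeating Step~1 in the dual gives $N_{k-1}\le N_k$.

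\emph{Step 3: conclude.} From $N_{k-1}\le N_k$ and $N_{k+1}\le N_k$ we get $N_{k-1}N_{k+1}\le N_k^2$. Equality holds only if both factors are at their maxima, because all quantities are positive. Hence $N_{k-1}=N_k=N_{k+1}$.

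Two points need checking. First, Theorem~\ref{thm:main}(3) must apply verbatim to $P^*$. It does, since the theorem holds for every finite poset. Second, the dual indexing $N^*_j=N_{n+1-j}$ follows from word reversal, $\LE(P)\to\LE(P^*)$. Boundary ranks ($k=1$ or $k=n$) cause no trouble, because positivity of all three counts already excludes them.

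Alternatively, Theorem~\ref{thm:main}(2) gives $N_{k+1}/N_k\le N_k/N_{k-1}$, with equality here. One could try to extract equality from the certificate~\eqref{eq:certificate}, but the duality argument above is simpler and is the one I would use.
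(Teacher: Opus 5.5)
Your proposal is correct and matches the paper's proof: you substitute $N_{k-1}N_{k+1}=N_k^2$ into property~C at rank $k$ to get $N_k\ge N_{k+1}$, apply property~C to the order dual at rank $n+1-k$ to get $N_k\ge N_{k-1}$, and then use that the product of the two neighbours equals $N_k^2$ to force equality. The only difference is cosmetic: Step~1 needs no contradiction, since the substitution gives $N_k^2\ge N_kN_{k+1}$ directly.
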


\begin{proof}
Only necessity requires proof. Equality and $N_k>0$ force both neighbouring counts to be positive. Substituting $N_{k-1}N_{k+1}=N_k^2$ into property~C gives $N_k^2\ge N_kN_{k+1}$, hence $N_k\ge N_{k+1}$. Applying property~C to the order dual, at rank $n+1-k$, similarly gives $N_k\ge N_{k-1}$. Since the product of these two neighbours is $N_k^2$, both inequalities must be equalities.
\end{proof}

\appendix
\section{Computational checks}\label{sec:census}
\suppressfloats[t]

Independently of the proof, we checked the conclusions by exact computation. Posets were enumerated up to isomorphism ($1,2,5,16,63,318,2045,16999,183231,2567284$ posets for $n=1,\dots,10$). For each poset and each element $x$, the rank counts were computed exactly from the lattice of order ideals by
\[
N_k=\sum_{\substack{I\text{ ideal},\ |I|=k-1\\ x\text{ minimal in }P-I}}e(I)\,e(P-I-x),
\]
and \textup{(T)} was compared in exact integer arithmetic for all $1\le a,h\le n$. An instance $(x,a,h)$ is \emph{nontrivial} if the right side of \textup{(T)} is positive, and an \emph{equality} if it is nontrivial and both sides agree. The counting code was tested against brute-force enumeration on random posets with $n\le7$.

A self-contained Python script reproducing Tables~\ref{tab:census-all} and~\ref{tab:census-w3} and the coupling check below is included as the ancillary file \texttt{anc/verify\_chan\_pak.py} of the arXiv submission. It generates the posets up to isomorphism by adjoining a maximal element above each order ideal and reducing to a canonical form, checks the number of classes against OEIS A000112, and then carries out the computations described here. A second ancillary file, \texttt{anc/check\_generator.py}, uses the same generator and performs the construction checks described at the end of this appendix.

\begin{table}[htbp]
\centering\small
\begin{tabular}{rrrrrr}
\toprule
$n$ & posets & instances $(x,a,h)$ & nontrivial & equalities & failures\\
\midrule
1 & 1 & 1 & 0 & 0 & 0\\
2 & 2 & 16 & 2 & 2 & 0\\
3 & 5 & 135 & 18 & 15 & 0\\
4 & 16 & 1{,}024 & 135 & 90 & 0\\
5 & 63 & 7{,}875 & 1{,}016 & 523 & 0\\
6 & 318 & 68{,}688 & 8{,}898 & 3{,}443 & 0\\
7 & 2{,}045 & 701{,}435 & 92{,}057 & 26{,}610 & 0\\
8 & 16{,}999 & 8{,}703{,}488 & 1{,}167{,}791 & 252{,}819 & 0\\
9 & 183{,}231 & 133{,}575{,}399 & 18{,}388{,}659 & 3{,}013{,}983 & 0\\
10 & 2{,}567{,}284 & 2{,}567{,}284{,}000 & 363{,}002{,}062 & 45{,}764{,}848 & 0\\
\bottomrule
\end{tabular}
\caption{Inequality \textup{(T)} for every element of every poset with $n\le10$ elements.}
\label{tab:census-all}
\end{table}

\begin{table}[htbp]
\centering\small
\begin{tabular}{rrrrrr}
\toprule
$n$ & posets of width $3$ & instances $(x,a,h)$ & nontrivial & equalities & failures\\
\midrule
11 & 3{,}168{,}869 & 4{,}217{,}764{,}639 & 356{,}132{,}844 & 24{,}384{,}905 & 0\\
12 & 27{,}130{,}482 & 46{,}881{,}472{,}896 & 3{,}758{,}381{,}626 & 202{,}988{,}494 & 0\\
\bottomrule
\end{tabular}
\caption{Inequality \textup{(T)} for every element of every width-three poset with $11$ or $12$ elements.}
\label{tab:census-w3}
\end{table}

We also tested Theorem~\ref{thm:inclusion} directly, without using the construction of Sections~\ref{sec:exchange}--\ref{sec:inclusion}. For every poset with $n\le8$ elements, every element $x$ and every pair of consecutive feasible prefix sizes $p,p+1$, we formed the network with arcs source $\to I$ of capacity $w(I)Z_{p+1}$, $I\to J$ for $I\subset J$ of infinite capacity, and $J\to$ sink of capacity $w(J)Z_p$, where $Z_p=\sum_{|I|=p}w(I)$. A coupling supported on inclusions exists if and only if the maximum flow equals $Z_pZ_{p+1}$. This held in all cases ($19{,}449$ posets).

Finally, we checked the construction of Sections~\ref{sec:endpoint}--\ref{sec:inclusion} itself, in exact rational arithmetic, for every poset with $n\le8$ elements, every element $x$ and every pair of consecutive feasible prefix sizes. For each nested pair $(I,J)$ we verified the endpoint inequalities and mass identities of Theorem~\ref{thm:endpoint}(3); for each layer, the legality of positive-rate exchanges, the exit-rate bound, detailed balance with respect to $\mu_p$, and connectivity (Proposition~\ref{prop:exchange}). We then built the generator $A$ from the table of Section~\ref{sec:coupled} and verified that rows (iv) and (v) are nonnegative (Lemma~\ref{lem:budget}), that every row with positive rate leads to a state of $\mathcal C_p$, and that aggregating the joint rates by destination reproduces every rate of $Q_p$ and $Q_{p+1}$ exactly, which is \eqref{eq:marginal}. Finally, on every closed communicating class of $A$ we solved $\pi A=0$ exactly and compared both marginals with $\mu_p$ and $\mu_{p+1}$. At $n=8$ this covers $456{,}018$ pairs of consecutive layers and $1{,}735{,}666$ nested states, with $6{,}981{,}118$ positive joint rates. We ran the same checks on a uniformly random sample of $20{,}000$ of the $183{,}231$ posets with $9$ elements (fixed seed), covering $706{,}230$ pairs of consecutive layers and $3{,}311{,}696$ nested states. No check failed. In every case $A$ had a single closed class. The script \texttt{anc/check\_generator.py} performs these checks.

\FloatBarrier
\section*{Acknowledgements}
This work was carried out with substantial assistance from AI systems. The principal one was EmiOS, the author's research harness, which coordinates large language models to develop, record and verify mathematical arguments; much of the underlying work was produced and checked within it. OpenAI GPT models, used both inside the harness and directly, and Anthropic Claude were used to search for and develop the arguments, to check proofs, to write and run the verification code of Appendix~\ref{sec:census}, and to prepare the manuscript. The author takes responsibility for the mathematical claims and the final manuscript.

\end{document}